\documentclass[a4paper,12pt]{article}%

\usepackage{amsmath}%
\usepackage{amsfonts}%
\usepackage{amssymb}%
\usepackage{graphicx}

\newtheorem{proposizione}{Proposition}

\newtheorem{definizione}{Definition}
\newenvironment{proof}[1][Proof]{\textbf{#1.} }{\ \rule{0.5em}{0.5em}}

\begin{document}

\title{A remark on  Domain Decomposition approaches  solving Three Dimensional Variational Data Assimilation models.}
\author{Luisa D'Amore, Rosalba Cacciapuoti \\Department of Mathematics and Applications, \\ University of Naples Federico II, Naples, ITALY \\ luisa.damore@unina.it, rosalb.cacciapuoti@studenti.unina.it}
\date{}
\maketitle
\begin{abstract}
\noindent Data Assimilation (DA) is a methodology for combining mathematical models
simulating complex systems (the background knowledge) and measurements (the reality or
observational data) in order to improve the estimate of the system state. This  is a large scale ill posed inverse problem then in this note we consider the Tikhonov-regularized variational formulation of 3D- DA problem, namely the so-called 3D-Var DA problem. We review two Domain Decomposition (DD) approches, namely the functional DD and the discrete Multiplicative Parallel Schwarz, and as the 3D-Var DA problem is a least square problem,  we  prove the equivalence between these approches.
\end{abstract}

\section{Introduction}
The DD methods are well established techniques for solving boundary-value problems \cite{DD1,DD2}. The earliest known DD method was proposed in the pioneering work of H. A. Schwarz in 1869 \cite{S}. Renewed interest in these methods was sparked by the advent of parallel
computing, and the Parallel Schwarz Method (MPS) was introduced by P.L. Lions in 1988 \cite{MPS}. In \cite{tesi} the MPS is applied for solving a three dimensional variational Data Assimilation (DA) problem, which is  a large scale inverse and ill posed problem  used to handle a huge amount of data and   requiring  new mathematical and algorithmic approaches for its solution \cite{DA,articoloAIP,articoloJNAIAM,articolo,articoloJCP,DA2}. In this note, we review the two DD approches, namely the  one  introduced in \cite{articolo} and the   MPS method applied to the Euler-Lagrange equations arising from the VarDA minimization problem. We prove equivalence between these approaches.\\ The note is organized as follows: in section 2 we briefly review  DA inverse problem and its  variational formulation \cite{T}; in section 3 we apply the DD approches and we prove the main result.

\section{The DA inverse problem}
Let $x \in \Omega \subset \mathbb{R}^{N}$, $t \in [0,T]$ and let $u(t,x)$ be the state evolution of a predictive system from time $t - \Delta t$ to time $t$, governed by the mathematical model $\mathcal{M}_{t}[ u(t,x)]$. So, it is

\begin{displaymath}
\mathcal{M}_{t-\Delta t,t}: \ \ u(t-\Delta t,x) \to u(t,x).
\end{displaymath}
Let $\{t_{k}\}_{k=0,1,...}$ be a discretization of interval of time $[0,T]$, where  $t_{k}=t_{0}+k\Delta t$, and let $D_{NP}(\Omega)={\{(x_{j})\}}_{j=1,...,NP} \in \mathbb{R}^{NP \times N}$, be a discretization of $\Omega \subset \mathbb{R}^{N}$, where $x_{j}\in \Omega$. \\
Let:
\begin{displaymath}
v(t,y)=\mathcal{H}(u(t,y)), \quad y \in \Omega
\end{displaymath} 
denote the observations mapping, where $\mathcal{H}$ is a given nonlinear operator.

For each $k = 0,1,...$, we consider
\begin{itemize}

\item $\textbf{u}_{k}^{b}=\{{u}_{k}^{j}\}_{j=1,...,NP}^{b} \equiv \{u(t_{k},x_{j})^{b}\}_{j=1,...,NP} \in \mathbb{R}^{NP}$: (background) numerical solution of the model $\mathcal{M}_{t}[ u(t,x)]$ on $\{t_{k}\} \times D_{NP}(\Omega)$; 

\item  $\textbf{v}_{k}=\{v(t_{k},y_{j})\}_{j=1,...,nobs}$: the vector values of the observations on $y_{j}\in \Omega$ at time $t_{k}$;

\item  $\mathcal{H}(x) \backsimeq \mathcal{H}(y)+ \textbf{H}(x-y)$: a linearization of $\mathcal{H}$, where \textbf{H}$\in \mathbb{R}^{NP \times nobs}$ is the matrix
obtained by the first order approximation of the Jacobian of $\mathcal{H}$ and $nobs \ll NP$;
\item \textbf{R} and \textbf{B} the covariance matrices of the errors on the observations and on the background,
respectively. These matrices are symmetric and positive definite.
\end{itemize}

\begin{definizione}\label{defDA}(The DA inverse problem). The DA inverse problem is to compute the vector
 $\textbf{u}_{k}^{DA}=\{u_{k}^{j}\}_{j=1,...,NP}^{DA}$ such that:
 \begin{equation}\label{SDA}
\textbf{v}_{k}=\textbf{H}[\textbf{u}_{k}^{DA}].
\end{equation}
\end{definizione}

\noindent Since $\textbf{H}$ is typically rank deficient and highly ill conditioned, DA is an ill posed inverse
problem \cite{articolo}. The objective is determine the solution in least squares sense.
\begin{definizione}
The solution in least squares sense for the problem in (\ref{SDA}) is a vector \textbf{u}$^{DA}$ such that:
\begin{equation}\label{ls}
\textbf{u}^{DA}=argmin_{\textbf{u}\in \mathbb{R}^{NP}}\textbf{J}(\textbf{u})=argmin_{u}\left\{||\textbf{H}\textbf{u}-\textbf{v}||_{\textbf{R}}^{2}\right\}.
\end{equation}
\end{definizione}
\noindent The problem in (\ref{ls}) ignores the background, so we consider the following Tikhonov-regularized formulation. In the following we let time $t_{k}$
be fixed, i.e. we consider the so-called 3D-Var DA problem \cite{articolo}, then for simplicity of notations,
we refer to \textbf{u}$_{k}^{b}$ and \textbf{u}$_{k}^{DA}$ omitting index $k$.

\begin{definizione}(The 3D-Var DA problem). 3D Variational DA problem is to compute the vector $\textbf{u}^{DA}$ such that
\begin{equation}\label{3Dvar}
\textbf{u}^{DA}=argmin_{\textbf{u}\in \mathbb{R}^{NP}}\textbf{J}(\textbf{u})=argmin_{u}\left\{||\textbf{H}\textbf{u}-\textbf{v}||_{\textbf{R}}^{2}+\lambda ||\textbf{u}-\textbf{u}^{b}||_{\textbf{B}}^{2}\right\}
\end{equation}
where $\lambda$ is the regularization parameter.
\end{definizione}
\noindent When the regularization parameter $\lambda$ appraches to zero the regularized problem tends to the DA (ill
posed) inverse problem, while the increase the regularization parameter has the effect of decreasing the uncertainty in the
background. In the following we let $\lambda =1$ as we do not address the impact of the regularization parameter.
\\
The 3D-Var operator is:
\begin{equation}\label{3Doper}
\textbf{J}(\textbf{u})\equiv \textbf{J}(\textbf{u}, \textbf{R}, \textbf{B}, D_{NP}(\Omega))=(\textbf{H}\textbf{u}-\textbf{v})^{T}\textbf{R}(\textbf{H}\textbf{u}-\textbf{v})+  (\textbf{u}-\textbf{u}^{b})^{T}\textbf{B}(\textbf{u}-\textbf{u}^{b}).
\end{equation}
The matrix $\textbf{H}$ is ill conditioned so we consider the preconditioner matrix $\textbf{V}$ such that $\textbf{B}=\textbf{V}\textbf{V}^{T}$. Let $\partial \textbf{u}^{DA}={\textbf{u}^{DA}}-{\textbf{u}^{b}}$ and $\textbf{w}=\textbf{V}^{T} \partial \textbf{u}^{DA}$, the operator $\textbf{J}$ in (\ref{3Doper}) can be rewritter as follows:
\begin{equation}\label{funzionaleJ}
\textbf{J}(\textbf{w})=\frac{1}{2}\textbf{w}^{T}\textbf{w}+\frac{1}{2} {(\textbf{H}\textbf{V}\textbf{w}-\textbf{d})}^{T}\textbf{R}^{-1}(\textbf{H}\textbf{V}\textbf{w}-\textbf{d}),
\end{equation}
where 
\begin{equation}\label{vettored}
\textbf{d}=[\textbf{v}-\textbf{H}(\textbf{u})].
\end{equation}
\section{The DD approaches applied to DA inverse problem}
Our propose in \cite{tesi} has been applying the DD approach in \cite{MPS}, i.e. MPS, for solving three dimensional variational DA problem. \\ 
The discrete MPS used in \cite{tesi} is composed of the following steps:
\begin{enumerate}
\item Decomposition of domain $\Omega$ into a sequence of sub domains $\Omega_{i}$ such that:
\begin{displaymath}
\Omega=\bigcup_{i=1}^{N}\Omega_{i}.
\end{displaymath}
\item Definition of interfaces of sub domains $\Omega_{i}$ as follows:
\begin{equation}\label{interfacce}
\Gamma_{ij}:=\partial \Omega_{i} \cap \Omega_{j} \ \ \ \ \textrm{for $i,j=1,...,J$}.
\end{equation}
\item Definition of restriction matrices $R_{i}$, $R_{ij}$ to sub domain $\Omega_{i}$ and interface $\Gamma_{ij}$, and extension matrices $R_{i}^{T}$, $R_{ij}$ to domain $\Omega$ for $i,j=1,...,J$ as follows:
\begin{equation}\label{RjDA}
R_{i}=\bordermatrix{  \footnotesize
& & &  & \textrm{ \footnotesize $s_{i-1}+1$} &\cdots & \textrm{ \footnotesize $s_{i-1}+r_{i}$} & & & \cr
& 0 & \cdots & 0& 0 & \cdots & 0 &0& \cdots  & 0  \cr
& \vdots & &\vdots & \vdots  &  & \vdots &\vdots & & \vdots  \cr
& 0 & \cdots & & 0 & \cdots & 0 &0 & \cdots &0 \cr
\textrm{ \footnotesize $s_{i-1}+1$}& 0 & \cdots & 0 & 1&  & & 0 &\cdots & 0\cr
\vdots & \vdots & & \vdots &  &\ddots & & &  & & \cr
\textrm{ \footnotesize $s_{i-1}+r_{i}$} & 0 &\cdots  &0 & 0 & &1 & 0 & \cdots & 0  \cr
& 0 & \cdots & 0& 0 & \cdots & 0 &0& \cdots  & 0  \cr
& \vdots & &\vdots & \vdots  &  & \vdots &\vdots & & \vdots  \cr
& 0 & \cdots & 0& 0 & \cdots & 0 &0 & \cdots &0 \cr
},
\end{equation}

\begin{equation}
R_{ij}=\bordermatrix{
& & &  & \textrm{ \footnotesize $\bar{s}_{i-1,i}+1$} &\cdots & \textrm{ \footnotesize $\bar{s}_{i-1,i}+r_{i}$} & & & \cr
& 0 & \cdots & 0& 0 & \cdots & 0 &0& \cdots  & 0  \cr
& \vdots & &\vdots & \vdots  &  & \vdots &\vdots & & \vdots  \cr
& 0 & \cdots & & 0 & \cdots & 0 &0 & \cdots &0 \cr
\textrm{ \footnotesize $\bar{s}_{i-1,i}+1$} & 0 & \cdots & 0 & 1&  & & 0 &\cdots & 0\cr
\vdots & \vdots & & \vdots &  &\ddots & & &  & & \cr
\textrm{ \footnotesize $\bar{s}_{i-1,i}+r_{i}$} & 0 &\cdots  &0 & 0 & &1 & 0 & \cdots & 0  \cr
& 0 & \cdots & 0& 0 & \cdots & 0 &0& \cdots  & 0  \cr
& \vdots & &\vdots & \vdots  &  & \vdots &\vdots & & \vdots  \cr
& 0 & \cdots & 0& 0 & \cdots & 0 &0 & \cdots &0 \cr
}
\end{equation}
where $s_{i,j}=r_{i}-C_{i,j}$, $\bar{s}_{i,j}=s_{i,j}+t_{ij}$, and $r_{i}$, $t_{i,j}$, $C_{i,j}$ points of sub domain $\Omega_{i}$, interfaces $\Gamma_{ij}$ and sub domain $\Omega_{ij}=\Omega_{i}\cap \Omega_{j}$, respectively.
\item  For $i=1,2,...,J$,  solution of $J$ subproblems $P_{i}^{n+1}$, for $n=0,1,2,...$ where 
\begin{equation}\label{argmin}
\begin{split}
P_{i}^{n+1} \ \ argmin_{\textbf{u}_{i}^{n+1}\in \mathbb{R}^{r_{i}}}\textbf{J}_{i}(\textbf{u}_{i}^{n+1}),
\end{split}
\end{equation}
where
\begin{equation}\label{funzionaliJi}
\textbf{J}_{i}(\textbf{u}_{i}^{n+1})=||\textbf{H}_{i}\textbf{u}_{i}^{n+1}-\textbf{v}_{i}||_{\textbf{R}_{i}}^{2}+\\ ||\textbf{u}_{i}^{n+1}-{({\textbf{u}_{i}}^{b})}||_{\textbf{B$_{i}$}}^{2}+ ||\textbf{u}_{i}^{n+1}/\Gamma_{ij}-\textbf{u}_{k}^{n}/\Gamma_{ij}||_{\textbf{B$/\Gamma_{ij}$}}^{2},
\end{equation}
as \textbf{B}$_{i}=R_{i}\textbf{B}R_{i}^{T}$ is a covariance matrix, we get that \textbf{B}/$\Gamma_{ij}=R_{i}\textbf{B}R_{ij}^{T}$ are the restriction of the matrix $B$, respectively, to the sub domain $\Omega_{i}$ and interface $\Gamma_{ij}$ in (\ref{interfacce}) $\textbf{H}_{i}=R_{i}\textbf{H}R_{i}^{T}$, ${\textbf{R}_{i}}=R_{i}\textbf{R}R_{i}^{T}$ the restriction of the matrices $\textbf{H}$, ${\textbf{R}}$ to the sub domain $\Omega_{i}$, $\textbf{u}_{i}^{b}=R_{i}\textbf{u}^{b}$, $\textbf{u}_{i}^{n+1}/\Gamma_{ij}=R_{ij}\textbf{u}_{i}^{n+1}$, $\textbf{u}_{j}^{n}/\Gamma_{ij}=R_{ij}\textbf{u}_{j}^{n}$ the restriction of vectors $\textbf{u}^{b}$, $\textbf{u}_{i}^{n+1}$, $\textbf{u}_{j}^{n}$ to the sub domain $\Omega_{i}$ and interface $\Gamma_{ij}$,  for $i,j=1,2,...,J$. \\
\\
The MPS in \cite{MPS} is used for solving boundary-value problems and as transmission condition on interfaces $\Gamma_{ij}$ for $i,j=1,...,J$ it requires that solution of subproblem on $\Omega_{i}$ at iteration $n+1$ coincides with solution of subproblem on adjacent sub domain $\Omega_{j}$ at iteration $n$; but the 3D-Var DA problem is a variational problem. So, according MPS, we impose the minimization in norm $||\cdot ||_{\textbf{B}/{\Gamma_{ij}}}$ between $\textbf{u}_{i}^{n+1}$ and $\textbf{u}_{j}^{n}$.
The functional $\textbf{J}$ defined in (\ref{3Doper}) as well as all the functionals $\textbf{J}_{i}$ defined in (\ref{funzionaliJi}), are quadratic (hence, convex), so
their unique minimum are obtained as zero of their gradients. In particolar, the functional  $\textbf{J}_{i}$ can be rewritten as follows:
\begin{displaymath}
\begin{split}
\frac{1}{2}(\textbf{w}_{i}^{n+1})^{T}\textbf{w}_{i}^{n+1}+ \frac{1}{2}(\textbf{H}_{i}\textbf{V}_{i}\textbf{w}_{i}^{n+1}-\textbf{d}_{i})^{T}\textbf{R}_{i}^{-1}(\textbf{H}_{i}\textbf{V}_{i}\textbf{w}_{i}^{n+1}-\textbf{d}_{i})+\\ \frac{1}{2}(\textbf{V}_{ij}\textbf{w}_{i}^{n+1}-\textbf{V}_{ij}\textbf{w}_{j}^{n+1})^{T} \cdot (\textbf{V}_{ij}\textbf{w}_{i}^{n+1}-\textbf{V}_{ij}\textbf{w}_{j}^{n}),
\end{split}
\end{displaymath}
where $\textbf{w}_{i}^{n+1}=\textbf{V}_{i}^{T}({\textbf{u}_{i}^{n+1}}-{\textbf{u}_{i}^{b}})$, $\textbf{V}_{i}=R_{i}\textbf{V}R_{i}^{T}$ is the restriction of matrix $\textbf{V}$ to sub domain $\Omega_{i}$, $\textbf{V}_{ij}=R_{i}\textbf{V}R_{ij}^{T}$ is the restriction of matrix $\textbf{V}$ to interfaces $\Gamma_{ij}$, $\textbf{d}_{i}$ the restriction of vector $\textbf{d}$ defined in (\ref{vettored}).\\ The gradients of $\textbf{J}_{i}$ is:
\begin{equation}
\nabla \textbf{J}_{i}(\textbf{w}_{i}^{n+1})=\textbf{w}_{i}^{n+1}+\textbf{V}_{i}^{T}\textbf{H}_{i}^{T}\textbf{R}_{i}^{-1}(\textbf{H}_{i}\textbf{V}_{i}\textbf{w}_{i}^{n+1}-\textbf{d}_{i})+\textbf{V}_{ij}^{T}(\textbf{V}_{ij}\textbf{w}_{i}^{n+1}-\textbf{V}_{ij}\textbf{w}_{j}^{n})
\end{equation}
that can be rewritten as follows
\begin{equation}\label{gradiente}
\nabla \textbf{J}_{i}(\textbf{w}_{i}^{n+1})=(\textbf{V}_{i}^{T}\textbf{H}_{i}^{T}\textbf{R}_{i}^{-1}\textbf{H}_{i}\textbf{V}_{i}+I_{i}+B/\Gamma_{ij})\textbf{w}_{i}^{n+1}-\textbf{c}_{i}+B/\Gamma_{ij} \textbf{w}_{j}^{n},
\end{equation}
where 
\begin{equation}\label{ciMPS}
{c}_{i}=(\textbf{V}_{i}^{T}\textbf{H}_{i}^{T}\textbf{R}_{i}^{-1}\textbf{H}_{i}\textbf{V}_{i}\textbf{d}_{i}),
\end{equation} 
and $I_{i} \in \mathbb{R}^{r_{i} \times r_{i}}$  the identity matrix. \\ From (\ref{gradiente}) by considering the Euler-Lagrange equations we obtain the following systems ${(S_{i}^{MPS})}^{n+1}$:
\begin{equation}\label{DAP}
{(S_{i}^{MPS})}^{n+1}:  \ \ \ A_{i}^{MPS}\textbf{w}_{i}^{n+1}={c}_{i}-\sum_{j \neq i}A_{i,j}\textbf{w}_{j}^{n}, 
\end{equation}
to solve for $n=0,1,...$, where
\begin{equation}\label{AiMPS}
A_{i}^{MPS}=(\textbf{V}_{i}^{T}\textbf{H}_{i}^{T}\textbf{R}_{i}^{-1}\textbf{H}_{i}\textbf{V}_{i}+I_{i}+\textbf{B}/\Gamma_{ij}),
\end{equation}
and $A_{ij} =\textbf{B}/\Gamma_{ij}$, for $i,j=1,...,J$.
\item  For $i=1,...,J$, computation of  $\textbf{u}_{i}^{n+1}$, related to the sub domain $\Omega_{i}$, as follows:
\begin{equation}\label{solDAi}
\textbf{u}_{i}^{n+1}=\textbf{u}_{i}^{b}+\textbf{B}_{i}^{-1} \textbf{V}_{i}\textbf{w}_{i}^{n+1}\\.
\end{equation}
\item Computation of $\textbf{u}^{DA}$, solution of problem 3D-Var DA in (\ref{3Dvar}), obtained by patching together all the vectors $\textbf{u}_{i}^{DA}$, i.e.:
\begin{equation}\label{uda}
\textbf{u}^{DA}(x_{j})=\left\{\begin{array}{ll} \textbf{u}_{i}^{m}(x_{j}) & \textrm{se $x_{j} \in \Omega_{i}$}\\
\textbf{u}_{k}^{m}(x_{j}) & \textrm{se $x_{j} \in \Omega_{k}$ o $x_{j} \in \Omega_{i} \cap \Omega_{k} $}, \end{array}, \right.
\end{equation}
for $i,k=1,...J$, and $m$ corresponding iterations needed to stop of the iterative procedure.
\end{enumerate}

\noindent The functional DD approach in \cite{articolo} provides the minimum of the functional $\textbf{J}$ in (\ref{3Doper}) (defined on the entire domain) as a piecewise function obtained by collecting the minimum of each local functional $\textbf{J}_{i}^{DD-DA}$ defined on sub domain $\Omega_{i}$ and by adding a local constraint about the entire overlap region. It is composed of the following steps:
\begin{enumerate}
\item Decomposition of domain $\Omega$ into a sequence of sub domains $\Omega_{i}$ such that:
\begin{displaymath}
\Omega=\bigcup_{i=1}^{N}\Omega_{i}.
\end{displaymath}
\item Definition of the overlap regions of the sub domains $\Omega_{i}$, as follows:
\begin{displaymath}
\Omega_{ij}=\Omega_{i}\cap \Omega_{j}\ \ \ \textrm{for $i,j=1,...,J$}.
\end{displaymath}
\item Definition of functional restriction ${RO}_{i}$ and functional extension ${EO}_{i}$, as follows.
\begin{definizione}(Functional restriction). 
Let $f$ be a function belonging to the Hilbert space $\mathcal{K}([0,T] \times \Omega)$, that is:
\begin{displaymath}
f(t,x):[0,T] \times \Omega \mapsto \mathbb{R},
\end{displaymath}
then
\begin{displaymath}
RO_{i}:\mathcal{K}([0,T] \times \Omega) \mapsto ([0,T] \times \Omega_{i})
\end{displaymath}
is functional restriction such that 
\begin{displaymath}
RO_{i}(f(t,x)) \equiv f(t,x), \quad (t,x) \in [0,T] \times \Omega_{i},
\end{displaymath}
where $i=1,...,J$.\\
Moreover, for simplicity of notations, we let:
\begin{displaymath}
f_{i}(t,x)\equiv RO_{i}[f(t,x)].
\end{displaymath}
\end{definizione}
\begin{definizione}(Functional extension). 
Let $g_{i}$ con $i=1,...,J$ be a functions belonging to the Hilbert space $\mathcal{K}([0,T] \times \Omega_{i})$, then
\begin{displaymath}
EO_{i}:\mathcal{K}([0,T] \times \Omega_{i}) \mapsto ([0,T] \times \Omega)
\end{displaymath}
is functional extension such that
\begin{displaymath}
EO_{i}(g_{i}(t,x)) =\left\{\begin{array}{ll}
g_{i}(t,x) & x\in \Omega_{i}\\
0 & \textrm{altrimenti}
\end{array} \right.
\end{displaymath}
\end{definizione}
\item For $i=1,...,J$, solution of the $J$ subproblems ${(P_{i}^{DD-DA})}^{n+1}$ for $n=0,1,2,...$,  where
\begin{equation}\label{pDD}
\begin{split}
{(P_{i}^{DD-DA})}^{n+1} \ \ argmin_{\textbf{u}_{i}^{n+1}\in \mathbb{R}^{r_{i}}}\textbf{J}_{i}^{DD-DA}(\textbf{u}_{i}^{n+1})=\\ argmin_{\textbf{u}_{i}^{n+1}\in \mathbb{R}^{r_{i}}}(||\textbf{H}_{i}\textbf{u}^{\textbf{$RO_{i}$}}-\textbf{v}^{\textbf{$RO_{i}$}}||_{\textbf{R}_{i}}^{2}+\\ ||\textbf{u}^{\textbf{$RO_{i}$}}-{({\textbf{u}}^{b})}^{\textbf{$RO_{i}$}}||_{\textbf{B$_{i}$}}^{2}+ \mu ||\textbf{u}^{\textbf{$RO_{i}$}}/\Omega_{ij}-\textbf{u}^{\textbf{$RO_{j}$}}/\Omega_{ij}||_{\textbf{B$_{ij}$}}^{2}),
\end{split}
\end{equation}
where \textbf{B}$_{ij}$ is the restriction of matrix \textbf{B} to overlap region $\Omega_{ij}$, and \textbf{H}$_{i}$, \textbf{B}$_{i}$ the restriction of matrices \textbf{H}, \textbf{B} to sub domain $\Omega_{i}$, according the description in \cite{articolo}.\\
\\
Summarizing, the   DD approach used  in \cite{tesi} addresses the solution of  linear systems arising from the Euler-Lagrange equations  by using MPS method, while the DD approach introduced in \cite{tesi} directly focuses on the functional minimization problem decomposing the least square problem. As the 3D-Var DA is a quadratic functional, we know that its minimization is equivalent to the Euler-Lagrange equation solution. Then, we just need to demonstrate that the local Euler-Lagrange equations give rise to linear systems which are equivalent to those arising by using the MPS approach.  \\
\\
From (\ref{pDD}) we get the following systems ${({S}_{i}^{DD-DA})}^{n+1}$  for $i=1,...,J$ \cite{articolo}:
\begin{equation}\label{system}
{({S}_{i}^{DD-DA})}^{n+1}: \ \ \ {A}_{i}^{DD-DA}\textbf{w}_{i}^{n+1}={c}_{i}^{DD-DA}, 
\end{equation}
to solve for $n=0,1,...$, while the vectors ${c}_{i}^{DD-DA}$ and matrices ${A}_{i}^{DD-DA}$ are defined as follows: 
\begin{equation}\label{ciDD-DA}
{c}_{i}^{DD-DA}=(\textbf{V}_{i}^{T}\textbf{H}_{i}^{T}\textbf{R}_{i}^{-1}\textbf{H}_{i}\textbf{V}_{i}\textbf{d}_{i}),
\end{equation}
\begin{equation}\label{AiDD-DA}
{A}_{i}^{DD-DA}=(\textbf{V}_{i}^{T}\textbf{H}_{i}^{T}\textbf{R}_{i}^{-1}\textbf{H}_{i}\textbf{V}_{i}+I_{i})
\end{equation}
and $I_{i}$,  is the identity matrix.
\item For $i=1,...,J$, computation of  $\textbf{u}_{i}^{n+1}$ on  sub domains $\Omega_{i}$ as in (\ref{solDAi}).
\item Computation of $\textbf{u}^{DA}$,  solution of  3D-Var DA problem  as in (\ref{3Dvar}),  obtained as in (\ref{uda}).
\end{enumerate}
Firstly we note that the following equivalence holds on \cite{tesi}:
\begin{displaymath}
EO_{i}\equiv R_{i}, \quad RO_{i}\equiv R_{i}^{T},
\end{displaymath} 
so that if we let $A \in \mathbb{R}^{NP \times NP}$ an,   for $i=1,...,J$, we consider  $r_{i}$ points of $\Omega_{i}$, it is 
\begin{displaymath}
RO_{i}(A)\equiv R_{i}AR_{i}^{T}.
\end{displaymath} 
\\
\\
Finally we are able to prove the following result.
\begin{proposizione}
Let $\textbf{u}^{DA}$ in (\ref{uda}) be the solution the 3D-Var DA  problem in (\ref{3Dvar}) obtained applying the DD method in \cite{articolo}, that is, by solving for $n=0,1,...$ the linear systems ${(S_{i}^{DD-DA})}^{n+1}$ in (\ref{system}). Similarly, the MPS in \cite{tesi}, provides $\textbf{u}^{DA}$ by solving for $n=0,1,...$, the linear systems ${(S_{i}^{MPS})}^{n+1}$ in (\ref{DAP}). \\ 
We prove that linear systems in (\ref{DAP}) and (\ref{system}) are equivalent.
\end{proposizione}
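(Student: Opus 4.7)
The plan is to exploit the fact that both systems arise as the Euler--Lagrange equations (i.e.\ first-order optimality conditions) of quadratic local functionals, and then to translate one set of gradients into the other using the correspondence $EO_i\equiv R_i$, $RO_i\equiv R_i^T$ recorded just before the statement. Once everything is expressed in the preconditioned variable $\mathbf{w}_i = \mathbf{V}_i^T(\mathbf{u}_i - \mathbf{u}_i^b)$, the two systems live in the same space and can be compared term by term.

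First I would rewrite $\mathbf{J}_i^{DD-DA}$ in (\ref{pDD}) in terms of $\mathbf{w}_i^{n+1}$, exactly as was done for $\mathbf{J}_i$ in the displayed equation preceding (\ref{gradiente}). The data-mismatch term and the background term produce, after taking gradients and setting to zero, precisely the block $\mathbf{V}_i^T\mathbf{H}_i^T\mathbf{R}_i^{-1}\mathbf{H}_i\mathbf{V}_i + I_i$ of $A_i^{DD-DA}$ and the right-hand side $c_i^{DD-DA}$ in (\ref{ciDD-DA}); this is the content of (\ref{AiDD-DA}) and is common to both approaches. The nontrivial part is the overlap/interface coupling. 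I would differentiate the overlap penalty $\mu\|\mathbf{u}^{RO_i}/\Omega_{ij}-\mathbf{u}^{RO_j}/\Omega_{ij}\|_{\mathbf{B}_{ij}}^2$ with respect to $\mathbf{w}_i^{n+1}$ and, using $RO_i=R_i^T$ and the definition $\mathbf{V}_{ij}=R_i\mathbf{V}R_{ij}^T$, express the resulting contribution as $\mathbf{V}_{ij}^T(\mathbf{V}_{ij}\mathbf{w}_i^{n+1}-\mathbf{V}_{ij}\mathbf{w}_j^{n})$, which is exactly the extra term appearing in the gradient expression above (\ref{gradiente}). Splitting this between the left- and right-hand sides of the normal equation yields the additional summand $\mathbf{B}/\Gamma_{ij}$ in $A_i^{MPS}$ and the term $-\sum_{j\neq i}A_{ij}\mathbf{w}_j^n$ on the right-hand side of (\ref{DAP}).

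Putting these two pieces together, one concludes that the Euler--Lagrange equations of the DD-DA subproblem (\ref{pDD}) coincide, as linear systems in $\mathbf{w}_i^{n+1}$, with the MPS system (\ref{DAP}): the matrix $A_i^{DD-DA}$ augmented by the overlap block equals $A_i^{MPS}$ in (\ref{AiMPS}), while the right-hand side in (\ref{system}) augmented by the coupling becomes $c_i-\sum_{j\neq i}A_{ij}\mathbf{w}_j^n$. Invoking convexity of the quadratic functionals (stated in the paragraph before (\ref{gradiente})), the unique minimizer of each subproblem is characterized by its gradient equation, so the equivalence of systems implies equivalence of the two DD approaches.

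The main obstacle I anticipate is purely bookkeeping: one must verify that the restriction to the interface $\Gamma_{ij}$ used by MPS and the restriction to the overlap $\Omega_{ij}$ used by the functional DD yield the same operator $\mathbf{B}/\Gamma_{ij} = R_i\mathbf{B}R_{ij}^T$ when combined with the identifications $EO_i\equiv R_i$ and $RO_i\equiv R_i^T$. This requires tracking the index shifts $s_{i-1,i}$ and $\bar{s}_{i-1,i}=s_{i-1,i}+t_{ij}$ defined after (\ref{RjDA}), and checking that $R_{ij}\mathbf{V}^TR_i^T=\mathbf{V}_{ij}^T$ has the correct support. Once this identification is in place, the remaining algebra is a routine comparison of the two normal equations.
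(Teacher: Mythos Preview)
Your route differs from the paper's and, as written, leaves a gap. The paper does not try to show that the two linear systems are term-by-term identical; it specializes to $J=2$, writes the DD--DA solution explicitly as $\mathbf{w}_1^{n+1}=(A_1^{DD-DA})^{-1}c_1^{DD-DA}$ (and similarly $\mathbf{w}_2^{n}$), substitutes these into the MPS equation $A_1^{MPS}\mathbf{w}_1^{n+1}=c_1+A_{12}\mathbf{w}_2^{n}$, uses the decomposition $A_1^{MPS}=A_1^{DD-DA}+A_{12}$, and then checks that the resulting identity reduces to $c_1^{DD-DA}=c_1$, which holds by comparing (\ref{ciMPS}) and (\ref{ciDD-DA}). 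Equivalence is thus established by verifying that solutions of (\ref{system}) satisfy (\ref{DAP}), not by identifying the two normal equations.

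Your plan instead re-differentiates the DD--DA functional (\ref{pDD}), keeping the overlap penalty, and matches the outcome to the MPS gradient (\ref{gradiente}). Even if the interface/overlap identification you flag as an obstacle goes through, this shows only that the \emph{full} Euler--Lagrange system of (\ref{pDD}) is (\ref{DAP}). But the system (\ref{system}) you are asked to compare with is, by (\ref{AiDD-DA})--(\ref{ciDD-DA}), the \emph{uncoupled} system $(\mathbf{V}_i^{T}\mathbf{H}_i^{T}\mathbf{R}_i^{-1}\mathbf{H}_i\mathbf{V}_i+I_i)\,\mathbf{w}_i^{n+1}=c_i^{DD-DA}$, with no $\mathbf{B}/\Gamma_{ij}$ block on the left and no $\mathbf{w}_j^{n}$ on the right. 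Your gradient computation therefore does not recover (\ref{system}); it recovers (\ref{DAP}) a second time. What it does give you is the relation $A_i^{MPS}=A_i^{DD-DA}+A_{ij}$ together with $c_i=c_i^{DD-DA}$, which is precisely the algebraic input the paper uses --- but you still have to carry out the substitution step to conclude that the solution of the uncoupled system (\ref{system}) also solves the coupled system (\ref{DAP}).
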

\begin{proof}
Let us assume that $J=2$ so, we consider the sub domains $\Omega_{1}$, $\Omega_{2}$ and the interfaces $\Gamma_{12}:=\partial \Omega_{1} \cap \Omega_{2}$, $\Gamma_{21}:=\partial \Omega_{2} \cap \Omega_{1}$. \\ By using the DD method in \cite{articolo} it follows that:
\begin{equation}\label{dim1}
{(S_{1}^{DD-DA})}^{n+1}: \ \ \ A_{1}^{DD-DA}\textbf{w}_{1}^{n+1}=c_{1}^{DD-DA} \longrightarrow \textbf{w}_{1}^{n+1}={(A_{1}^{DD-DA})}^{-1}c_{1}^{DD-DA}
\end{equation}
and
\begin{displaymath}
{(S_{2}^{DD-DA})}^{n}:\ \ \ A_{2}^{DD-DA}\textbf{w}_{1}^{n}=c_{1}^{DD-DA} \longrightarrow \textbf{w}_{2}^{n}={(A_{2}^{DD-DA})}^{-1}c_{2}^{DD-DA}
\end{displaymath}
by using the MPS in \cite{tesi}, we get
\begin{equation}\label{dim}
\begin{array}{ll}
{(S_{1}^{MPS})}^{n+1}: \ \ \ A_{1}^{MPS}\textbf{w}_{1}^{n+1}&=c_{1}+A_{12}\textbf{w}_{2}^{n}\\ {(S_{2}^{MPS})}^{n+1}: \ \ \ A_{2}^{MPS}\textbf{w}_{2}^{n+1}&=c_{2}+A_{21}\textbf{w}_{1}^{n}.
\end{array}
\end{equation}
We prove the equivalence between linear systems ${(S_{1}^{DD-DA})}^{n+1}$ in (\ref{dim1}) and ${(S_{1}^{MPS})}^{n+1}$ in (\ref{dim}), i.e. we prove that solutions obtained from ${(S_{1}^{DD-DA})}^{n+1}$ and ${(S_{2}^{DD-DA})}^{n}$ in (\ref{dim1}) satisfy ${(S_{1}^{MPS})}^{n+1}$ in (\ref{dim}). \\
\\
Replacing $\textbf{w}_{1}^{n+1}$ by ${(A_{1}^{DD-DA})}^{-1}c_{1}^{DD-DA}$ in (\ref{dim}), it follows that:
\begin{equation}\label{dim2}
A_{1}^{MPS}(A_{1}^{DD-DA})^{-1}c_{1}^{DD-DDA}=c_{1}+A_{12}\textbf{w}_{2}^{n};
\end{equation}
the matrix $A_{1}^{MPS}$ in (\ref{AiMPS}) can be rewritten as 
\begin{equation}
A_{1}^{MPS}:=(\textbf{V}_{1}^{T}\textbf{H}_{1}^{T}\textbf{R}_{1}^{-1}\textbf{H}_{1}\textbf{V}_{1}+I_{1}+\textbf{B}/\Gamma_{12})=A_{1}^{DD-DA}+A_{12},
\end{equation}
where $A_{1}^{DD-DA}$ is defined in (\ref{AiDD-DA}).
Then, the (\ref{dim2}) becomes:
\begin{equation}\label{dim3}
(A_{1}^{DD-DA}+A_{12})(A_{1}^{DD-DA})^{-1}c_{1}^{DD-DA}=c_{1}+A_{12}\textbf{w}_{2}^{n}.
\end{equation} 
Replacing $\textbf{w}_{2}^{n}$ with ${(A_{2}^{DD-DA})}^{-1}c_{2}^{DD-DA}$ in (\ref{dim3}), it follows that:
\begin{displaymath}
c_{1}^{DD-DA}+A_{12}((A_{1}^{DD-DA})^{-1}c_{1}^{DD-DA}){|}_{\Gamma_{12}}=c_{1}+A_{12}((A_{2}^{DD-DA})^{-1}c_{2}^{DD-DA}){|}_{\Gamma_{12}}
\end{displaymath}
then, we obtain that
\begin{equation}\label{ug}
c_{1}^{DD-DA}=c_{1}.
\end{equation}
From (\ref{ciMPS}) and (\ref{ciDD-DA}) we have that   
\begin{displaymath}
c_{1}^{DD-DA}=(\textbf{V}_{i}^{T}\textbf{H}_{i}^{T}\textbf{R}_{i}^{-1}\textbf{H}_{i}\textbf{V}_{i}\textbf{d}_{i})=c_{1},
\end{displaymath}
so the proof is complete.
\end{proof}
\section{Conclusions}

In this note, we reviewed two DD approches, namely the first one is  the functional DD appiled to the variational model and the latter one is the discrete Multiplicative Parallel Schwarz applied to linear system arising from the Euler-Lagrange equations arising from the 3D-VarDA least square problem.  We proved equivalence between these approaches, according to the fact that the  VarDA model is quadratic.

\end{document}